\documentclass[11pt]{article}
\usepackage[margin=1in]{geometry}
\usepackage{cite}
\makeatletter
\let\NAT@parse\undefined
\makeatother
\usepackage{hyperref}
\usepackage{amsmath,amssymb,amsfonts,amsthm}
\usepackage{algorithmic}
\usepackage{graphicx}
\usepackage{textcomp}
\usepackage[caption=false,font=normalsize,labelfont=sf,textfont=sf]{subfig}
\usepackage{booktabs}
\usepackage{multirow}
\usepackage{easyReview}
\usepackage[ruled,linesnumbered]{algorithm2e}
\newtheorem{assumption}{Assumption}
\newtheorem{remark}{Remark}

\newtheorem{theorem}{Theorem}
\newtheorem{corollary}{Corollary}
\newtheorem{lemma}{Lemma}

\newcommand{\IEEEmembership}[1]{}
\newcommand{\IEEEPARstart}[2]{#1#2}
\newenvironment{IEEEkeywords}
  {\par\smallskip\noindent\textbf{Keywords:}\ }
  {\par\smallskip}

\def\BibTeX{{\rm B\kern-.05em{\sc i\kern-.025em b}\kern-.08em
    T\kern-.1667em\lower.7ex\hbox{E}\kern-.125emX}}
\begin{document}
\title{Performance guaranteed MPC Policy Approximation via Cost Guided Learning}
\author{	Chenchen Zhou, Yi Cao, \IEEEmembership{Senior Member, IEEE}, and Shuang-hua Yang, \IEEEmembership{Senior Member, IEEE}
	\thanks{This work was supported by the Research Funds of Institute of Zhejiang University-Quzhou and the Applied Basic Research Fund of the Institute of Zhejiang University-Quzhou (IZQ2022KJ1003) }
	\thanks{ Chenchen Zhou, Yi Cao and Shuang-hua Yang are with the College of Chemical and Biological Engineering, Zhejiang University, Hangzhou 310027, China and also with the Institute of Zhejiang University-Quzhou, Quzhou 324000, China (e-mail: yangsh@zju.edu.cn).}
	\thanks{Shuang-hua Yang is also with the Department of Computer Science, University of Reading,	Reading RG6 6AH, U.K. (e-mail: shuang-hua.yang@reading.ac.uk).}
}

\maketitle

\begin{abstract}
	Model predictive control (MPC) is widely used in industries but implementing it poses challenges due to  hardware or time constraints. A promising solution is to approximate the MPC policy using function approximators like neural networks. Existing methods focus on minimizing the error between the approximators outputs and the MPC optimal control actions on training data, which is called error guided learning approach in this paper. However, the goals of control law design is not to minimize the fitting error but to minimize the operation cost.
	This paper proposes a novel cost-guided learning approach that utilizes the cost sensitivity information from the MPC problem to directly minimize the loss in closed-loop performance. A theoretical analysis shows cost-guided learning provides tighter guarantees on optimality loss compared to traditional error-guided learning. Experiments on a continuous stirred tank reactor (CSTR) benchmark demonstrate that the proposed technique results in approximate MPC policies that achieve substantially better closed-loop performance. This work makes an important contribution by connecting the fitting errors with operational objectives, overcoming key limitations of existing approximation methods. The core idea could be applied more broadly for data-driven control.
\end{abstract}

\begin{IEEEkeywords}
	Machine learning, Predictive control for nonlinear systems, Optimization	
\end{IEEEkeywords}

\section{Introduction}
\label{sec:introduction}
\subsection{Pre-computed control policies}
\IEEEPARstart{M}{odel} Predictive Control (MPC) has been successfully applied in various fields\cite{darby2012a} to optimize dynamic system performance. 
However, implementing MPC faces challenges due to computational load and limited memory.
Methods like pre-computing optimal policies offline emerged to enable online usage without re-optimization. \textit{Explicit MPC} pioneered expressing MPC as piecewise-affine functions on polytopes \cite{bemporad2002}.  
But the polytopic regions grow exponentially with variables and constraints, limiting its usability for nonlinear systems\cite{krishnamoorthy2021}. 
An alternative is approximating the optimal policy with parametric approximators like neural networks, studied in \textit{learning from demonstrations}. 
The idea of using neural networks as function approximators for nonlinear MPC (NMPC) feedback laws was proposed, as early as 1995\cite{parisini1995}.  With recent deep learning advances, interest surged in approximating policies, see the tutorial \cite{mesbahFusionMachineLearning2022}. 

Research has focused on understanding safety and performance of approximate policies \cite{hertneck2018a,paulson2020a,karg2021}. 
Although these studies are important for advancing MPC policy approximation, there is a research gap - the training objective for approximate policies attempt to imitate an expert policy, rather than the ultimate goal of good performance when implemented online.
Performance is measured by operational cost, time, energy use, tracking error, etc. as defined in the MPC problem.
\subsection{MPC policy approximation by error guided learning}
Consider a nonlinear nonlinear MPC problem $\mathcal{P}(x(t))$ can be formulated as follows: 
\begin{subequations}
	\label{eq:MPC}
	\begin{align}
		&\min _{ u(\cdot)}   \sum_{k=0}^{N-1} \ell(x(k ), u(k))+\ell_f(x(N)) \label{eq:MPC_cost}\\
		&\text { s.t. }  x(k+1)=f(x(k), u(k)) \quad \forall k \in \mathbb{I}_{0: N-1} \label{eq:MPC_mc} \\
		&x(k) \in \mathcal{X}, \quad u(k) \in \mathcal{U} ,\quad x(N) \in \mathcal{X}_f, \quad x(0)=x(t)\label{eq:MPC_pc}
	\end{align}
\end{subequations}
where $x(t) \in \mathbb{R}^{n_x}$ and $u(t) \in \mathbb{R}^{n_u}$ denote the states and control inputs at time $t$, respectively. The mapping $f: \mathbb{R}^{n_x} \times \mathbb{R}^{n_u}\rightarrow \mathbb{R}^{n_x}$ represents the discrete-time plant model. $\ell: \mathbb{R}^{n_x} \times \mathbb{R}^{n_u} \rightarrow \mathbb{R}$ represents the stage cost, which can be either a tracking or economic objective, and $\ell_f: \mathbb{R}^{n_x} \rightarrow \mathbb{R}$ represents the terminal cost.
The parameter $N$ denotes the prediction horizon length, while \eqref{eq:MPC_pc} represents the constraints. In the traditional MPC paradigm, the optimization problem $\mathcal{P}(x(t))$ is solved at each time step $t$ by taking the current state $x(t)$ as initial condition, and the resulting optimal input $u(t) = u(0)$ is applied to the plant in a receding horizon manner. This implies a feedback control policy $\pi_{\mathrm{MPC}}: \mathbb{R}^{n_x} \rightarrow \mathbb{R}^{n_u}$ can be defined as:
	$
	u^*(t) = \pi_{\mathrm{MPC}}(x(t))
	$

The typical framework of approximating MPC policy is as follows \cite{parisini1995,paulson2020a}: 
 To approximate the MPC control law $\pi_{\mathrm{MPC}}(x(t))$, the feasible state space $\mathcal{X}$ is sampled, generating a set of randomly selected initial states $\{x_i\}_{i=1}^{N_s}$. For each initial state $x_i$, the MPC problem $\mathcal{P}(x_i)$ is solved to obtain the corresponding optimal input $u_i^* = \pi_{\mathrm{MPC}}(x_i)$. Using the collected data samples $\mathcal{D} = \{\left(x_i, u_i^*\right)\}_{i=1}^{N_s}$, a parametric function $\pi_{\text{approx}}(x ; \theta)$ is trained, parameterized by $\theta$, to minimize the mean squared error:
\begin{equation}
	\label{eq:mse}
	L_{\text{mse}}(\theta)=\frac{1}{N_s} \sum_{i=1}^{N_s}\left\|\pi_{\text {approx }}\left(x_i ; \theta\right)-u_i^*\right\|^2 ,
\end{equation}
where $ L_{\text{mse}} $ denotes the loss function in the form of sum of square errors, and in this paper, we named it \textit{fitting errors guided loss function}.
A perfect approximation $ \pi_{\text {approx }}\left(x_i\right)=\pi_{\mathrm{MPC}}\left(x_i\right) $ would minimize the operation cost. But fitting errors always exist, and it is unclear how they relate to the operation cost. There has been limited focus on directly using cost $J$ to guide policy learning.
There are some articles that analyze the impact of fitting errors to the operation cost \cite{karg2021}. 
However, those papers focus on evaluate the trained policy, not using the operation cost to guide the policy training.

In summary, most of existing MPC policy approximation methods aim to minimize the fitting errors to the optimal policy, however, techniques that directly account for operational costs to guide neural network training remain scarce \cite{chen2022a}, with the connection between fitting error and operational costs still murky. This limitation might lead to policies that may have small errors but still perform poorly when implemented.
To the end, the main contributions of this paper are summarized as follows:
\begin{enumerate}
	\item This paper proposes a novel cost-guided learning approach for MPC policy approximation. In contrast to traditional methods that minimize fitting errors, this technique directly minimizes the loss in operational cost caused by fitting errors.
	\item  A theoretical analysis shows cost-guided learning provides tighter guarantees on optimality loss compared to error-guided learning. The proposed method utilizes the sensitivity information from the MPC problem to focus learning on high-cost areas of the state space.
\end{enumerate}
The article continues as follows.
In Section II, the cost-guided learning technique is proposed, and it is shown that this technique can lead to a tighter upper bound on the optimality gap than the error-guided learning technique for training approximate policies. Furthermore, an example is used to illustrate the proposed approach in Section III. The article is concluded in Section IV.
\section{Objective guided deep learning}

\begin{figure*}[th]
	\centering
	\subfloat[]
	{
		\includegraphics[width=0.27\textwidth]{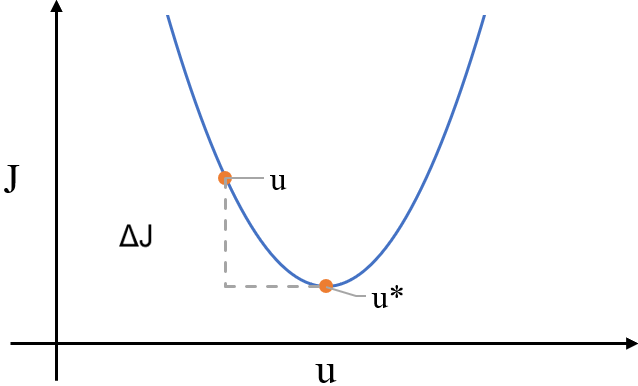}
		\label{fig:sharp}
	}
	\subfloat[]
	{
		\includegraphics[width=0.27\textwidth]{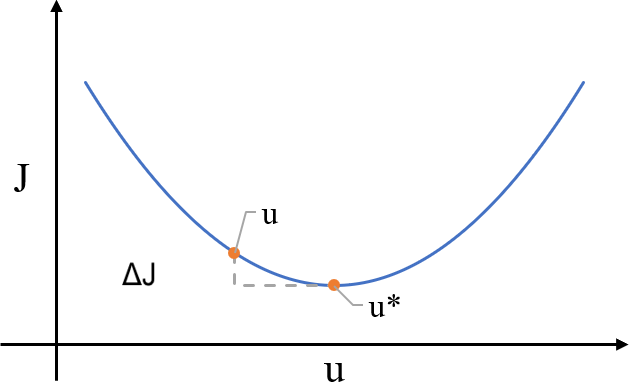}
		\label{fig:flat}
	}
	\subfloat[]
	{
		\includegraphics[width=0.27\textwidth]{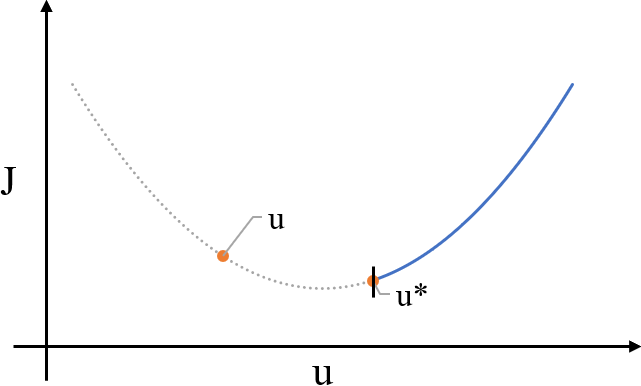}
		\label{fig:constrained}
	}
	\caption{Three types of optimums: (a) Sharp optimum; (b) flat optimum; (c) Constrained optimum. $J$ stands for operational cost, $u$ denotes the control input, $u^*$ stands for the optimal control input and $\Delta J$ is the losses caused by non-optimal inputs. 
	The blue curve is the $J$ where the constraints would be satisfied by that $u$. The gray dotted curve is $J$ where the constraints would be violated by that $u$. Black lines denote constraints.}
	\label{fig:optsurface}
\end{figure*}

The fitting errors of the trained policy can vary for different system states $x(t)$. 
Common MPC approximations use the mean squared error \eqref{eq:mse} as the loss function. This weights all errors equally.
However, approximations deviate from the exact solution, impacting system behavior. 
To provide further clarification, we can, as illustrated in Fig.~\ref{fig:optsurface}, distinguish between three classes of optimums when it comes to the actual implementation:
\begin{enumerate}
	\item \textbf{Unconstrained sharp optimum} Fig. \ref{fig:sharp}: In this case, the operation cost is sensitive to the value of control input.   Non-optimal $u$ incur high losses $\Delta J$.
	\item \textbf{Unconstrained flat optimum} Fig. \ref{fig:flat}: In this case, the operation cost is insensitive to the value of control input.  Non-optimal $u$ cause smaller $\Delta J$ than in the sharp optimum.
	\item \textbf{Constrained optimum} Fig. \ref{fig:constrained}: The more serious problem for implementation may occur when the problem is constrained. In this case, the optimal solution lies on the boundary of constraints. For example, optimal operation may require running the system at maximum tolerable load. Exceeding this load capacity can cause system failure. Here, fitting errors may result in an infeasible operation, potentially leading to safety accidents.
\end{enumerate}
Fig. \ref{fig:sharp} and Fig. \ref{fig:flat} illustrate that the same fitting errors may lead to different degrees of loss of optimality due to the sensitivity of the solutions of different $\mathcal{P}(x(t))$. 
For constrained problems, the error direction also affects constraint satisfaction. In some scenarios, the fitting errors might cause the system to violate certain constraints that were initially satisfied with the exact solution. 
There are some paper focus on this problem \cite{hertneck2018a,paulson2020a,karg2021}.
As mentioned previously, the sensitivity information from the MPC problem solution can be utilized to guide the training of the policy. In this section, we propose a cost-guided learning approach to approximate the MPC policy. 

\subsection{Control policy evaluation indicator}
For the sake of simplicity, we rewrite the MPC problem \eqref{eq:MPC} into a standard parametric nonlinear programming (NLP) problem $\mathcal{P}(p)$ of the following form:
\begin{subequations}
	\label{eq:NLP}
	\begin{align}
		&\min _{\mathbf{w}} J(\mathbf{w}, p) \label{eq:NLP_ec} \\
		&\text { s.t. } c(\mathbf{w}, p)=0, \qquad
		g(\mathbf{w}, p) \leq 0  \label{eq:NLP_cost} 
	\end{align}
\end{subequations}
where the initial state denoted as $p = x(0\vert t) = x(t)$. The decision variables are represented by $\mathbf{w} = [u(0 \vert t), \ldots, u(N-1 \vert t)]^{\top}$. The cost function \eqref{eq:MPC_cost} is now represented by $J$. Similarly, the system equations \eqref{eq:MPC_mc} are denoted by $c$, while the state and input constraints and the terminal constraint \eqref{eq:MPC_pc} are collectively referred to as $g$.  

As shown in Fig. \ref{fig:optsurface}, the operational cost $J$ alone is not a good indicator to evaluate control policy performance. In the constrained optimum case, inputs violating constraints can incur lower $J$. However, this does not mean such policies with constraint violations are better. Next, we introduce the Lagrangian function to evaluate control policies. 
The Lagrangian function of problem \eqref{eq:NLP} can be expressed as follows:
\begin{equation}\label{eq:lag}
	\mathcal{L}(\mathbf{w}, \lambda, \mu, p):=J(\mathbf{w}, p)+\lambda^{\top} c(\mathbf{w}, p)+\mu^{\top} g(\mathbf{w}, p)
\end{equation}
where $\lambda$ and $\mu$ are the Lagrangian multipliers of $c$ and $g$ respectively. 
For the inequality constraints, we define $g_{\mathbb{A}}(\mathbf{w}, p) \subseteq g(\mathbf{w}, p)$ as the set of active inequality constraints (i.e. $g_{\mathbb{A}}(\mathbf{w}, p) = 0$).
The Karush Kuhn Tucker (KKT) condition for this problem is given by
\begin{equation}\label{eq:KKT}
	\begin{aligned}%
		\psi(s(p),p) =:& 
		\begin{bmatrix}
			\mathcal{L}_{\mathbf{w}}(\mathbf{w}, \lambda, \mu, p)\\
			c(\mathbf{w}, p) \\
			g_{\mathbb{A}}(\mathbf{w}, p) 
		\end{bmatrix} =0 \\
		 g_{\bar{\mathbb{A}}}(\mathbf{w}, p)& < 0 , \quad \mu_{\bar{\mathbb{A}}} = 0 , \quad \mu_{\mathbb{A}} > 0 
	\end{aligned}
\end{equation}
For any point $\mathbf{s}^*(p):=\left[\mathbf{w}^*, \lambda^*, \mu^*\right]^{\top}$ which satisfies the KKT conditions \eqref{eq:KKT} for given $p$, it is called KKT point.
\begin{assumption}
	\label{ass:4}
	$J(\cdot,\cdot)$, $c(\cdot,\cdot)$, and $g(\cdot,\cdot)$ of the parametric NLP problem $\mathcal{P}(p)$ are twice continuously differentiable in a neighborhood of the KKT point $s^*(p)$. 
\end{assumption}
\begin{assumption}
	\label{ass:5}
	Linear independence constraint qualification (LICQ), Strong second-order sufficient conditions (SSOSC) and strict complementarity (SC) hold for the solution $s^*(p)$
\end{assumption}
These assumptions imply that KKT point $\mathbf{s}^*(p)$ is a unique local optimum of $\mathcal{P}(p)$ and is also a saddle point of  $\mathcal{L}(\mathbf{w}, \lambda, \mu, p)$ in
 the neighborhood of $\mathbf{s}^*(p)$, which is 
 $$
 \begin{aligned}
 	\mathcal{L}(\mathbf{w}^*, \lambda, \mu, p) &\leq \mathcal{L}(\mathbf{w}^*, \lambda^*, \mu^*, p) \\
 	 &= J((\mathbf{w}^*, p)) \leq \mathcal{L}(\mathbf{w}, \lambda^*, \mu^*, p)
 \end{aligned}
 $$
The larger Lagrangian values can indicate worse control law performance.
So we introduce the discrepancy of the Lagrangian values as control policy evaluation indicator:
\begin{equation}
	\label{eq:1lag}
	\Delta{\mathcal{L}}(u,p)=                                                                
	{\mathcal{L}}([u,\mathbf{\bar{w}}^*],\lambda^{*},\mu^{*},p)	-{\mathcal{L}}(\mathbf{w}^{*},\lambda^{*},\mu^{*},p)		
\end{equation}
where $\mathbf{w}^*=[ u_0^*,\mathbf{\bar{w}}^*] $ is the decision variable of the primal problem and $u_0^*$ denotes the optimal input at the current time, $\mathbf{\bar{w}}^*$ represents the rest decision variables except $u_0^*$.

\begin{assumption}
	When $\left\|u-u^*\right\|^2  \leq  \rho$ the active constraints $g_{\mathbb{A}}$ remain unchanged.
\end{assumption}
It is important to note the LICQ condition ensures active constraints do not change within a neighborhood of the optimum. Therefore, if $\|u-u^*\|$ is sufficiently small, $\mathcal{L}(u,p)$ can account for both constraint satisfaction and operational cost. 
So Lagrangian based control policy evaluation indicator could be expressed as follows:
\begin{equation}
	L_{\text{lag}}(\theta):=\dfrac{1}{N_s}\sum_{i=1}^{N_s}
	\left\| \Delta{\mathcal{L}}_i(\pi_{\text{approx}}(x_{i};\theta)) \right\|^{2} .
\end{equation}
Some literature reports that utilizing Lagrangian values as a loss is effective for linear systems \cite{chen2022a}. However, for nonlinear systems, it is computationally costly since the Lagrangian incorporates the nonlinear model. This constrains large-scale implementation.



\subsection{Local approximation of the Lagrangian function}
For each $ p $, the  Lagrangian function \eqref{eq:lag} is approximated locally by a quadratic function around the optimal point $ \mathbf{s}^*(p) $. Introducing deviation variables $\Delta u = u- u^*$, a Taylor expansion around the optimal point $\mathbf{s}(p)^*$ gives
\begin{equation}
	\label{eq:2lag}
	\Delta{\mathcal{L}}_p(u)\approx \Delta{\hat{\mathcal{L}}}_p(u)= \mathcal{L}_{u}(p)^{\top}\Delta u+ {\Delta u}^{\top} \mathcal{L}_{uu}(p)  \Delta u  ,
\end{equation}
where $$ \mathcal{L}_{u}(p)= \frac{\partial{\mathcal{L}}}{\partial{u}} + \frac{\partial{\mathcal{L}}}{\partial{\mathbf{\bar{w}_{c}}}} \frac{\partial{c}}{\partial{u}}  $$ 
and 
$$\begin{aligned}
	\mathcal{L}_{uu}(p) 
	=\frac{\partial{\mathcal{L}_{u}(p)}}{\partial{u}} + \frac{\partial{\mathcal{L}_{u}(p)}}{\partial{\mathbf{\bar{w}_{c}}}} \frac{\partial{c}}{\partial{u}}
\end{aligned}  .$$ 
Here $\mathbf{\bar{w}_c}$ denotes the $n_c + n_{g_{\mathbb{A}}}$ variables related to constraints chosen from $\mathbf{\bar{w}}$, \textit{i.e.} $\mathbf{\bar{w}_c}$ denotes all equality constraints and active constraints. $n_c $ and $n_{g_{\mathbb{A}}}$ respectively represent the number of equality constraints and active inequality constraints. 
Because $\mathbf{s}^*(p) $ is the optimal point , and it satisfies the Karush-Kuhn-Tucker (KKT) Conditions, so $\nabla \mathcal{L}(\mathbf{s}(p)^*,p)=0$. 

Using the quadratic cost function, it can be shown that the Lagrangian form of loss function can be expressed as
\begin{equation}
	L_{\text{w}}(\theta) = \dfrac{1}{N_s}\sum_{i=1}^{N_s}\left| {\Delta\pi_{\text{approx}}(x_{i};\theta)}^{\top}  {\mathcal{L}_{uu}(p_i)} {\Delta\pi_{\text{approx}}(x_{i};\theta)} \right| .
\end{equation}
where $\Delta\pi_{\text{approx}}(x_{i};\theta) = \pi_{\text{approx}}(x_{i};\theta)-u_i^*$.
In the remaining of this paper, $L_{\text{w}}$ is referred to as \textit{cost guided loss function}.

$\|  {\mathcal{L}_{uu}(p_i)}\|$ might be much larger or smaller than $\|\pi_{\text{approx}}(x_{i};\theta)- u_i^*\|$, which will make the training fail due to this numerical reason. The Lagrangian form of loss function is modified to 
\begin{equation}
	L_{\text{w}}(\theta) = \dfrac{1}{N_s}\sum_{i=1}^{N_s} \dfrac{1}{\gamma }\left| {\Delta\pi_{\text{approx}}(x_{i};\theta)}^{\top}  {\mathcal{L}_{uu}(p_i)} {\Delta\pi_{\text{approx}}(x_{i};\theta)} \right|  ,
\end{equation}
where $\gamma = \sup_{i=1,2,\dots,N_s} \{ \|{\mathcal{L}_{uu}(p_i)}\| \}$.

\begin{remark}
	The formulation is the same as weighted regression.
	Traditional weighted regression is weighted by the inverse of the covariance matrix of the observations. 
	However, here, we do not focus on the heteroscedasticity (the unequal variance of observations) but on control system cost.
	
\end{remark}

\begin{algorithm}[h]
	\caption{Cost guided learning based MPC policy approximation .}
	\label{alg:ampcp_cg}
	\KwIn{$\mathcal{P}(x), \mathcal{X}, \mathcal{D}=\emptyset, N_p, N_s=0$}
	\KwOut{$\pi_{\text {approx }}\left(x ; \hat{\theta}_{\text{w}}\right)$}
	\BlankLine
	\For{$i$ in $1,2,...,N_p$}
	{ 
		Sample $x_i \in \mathcal{X}$ 
		
		$ s_i^*(x_i),flag  \leftarrow \operatorname{Solve} \mathcal{P}\left(x_i\right)$ 
		
		\If{flag == succeeded}{
			Extract $u_{i}^{*}$ from the primal-dual solution vector $\mathbf{s}^{*}(x_{i})$ 
			
			Compute the hessian matrix $ {\mathcal{L}_{uu}(p_i)} $ of Lagrangian function \eqref{eq:lag}, evaluated at $p_{i}^{*}$
			
			$ \mathcal{D} \leftarrow \mathcal{D} \cup\left\{\left(x_i, u_i^*\right)\right\}$
			
			$N_s = N_s+1$
		}
		
	}
	$\hat{\theta}_{\text{w}} \leftarrow \arg \min_\theta \frac{1}{N_s} \sum_{i=1}^{N_s}\left| {\Delta \pi_{\text{approx}}(x_{j};\theta)}^{\top} {\mathcal{L}_{uu}(p_i)} {\Delta \pi_{\text{approx}}(x_{j};\theta)} \right|$
\end{algorithm}
The pseudocode for the proposed cost guide learning approach for MPC policy approximation is summarized in Algorithm \ref{alg:ampcp_cg}. 
Additionally, it is worth noting that when the solved MPC is infeasible, that solution is not included in the dataset used, as shown in the algorithm. 

\subsection{Upper bound of the loss of optimality}
Next, we will analyze the relationship between error-guided loss functions and cost-guided loss functions concerning the optimality loss. 
We will first state Theorem \ref{the:lw} which shows cost-guided loss provides upper bound on true loss.
\begin{assumption}
	\label{ass:lJ}
	For any point $ \mathbf{s} \in \mathbf{S}({s}^*(p),\rho)=\{s:\|s-s^*\|\leq\rho\} $, $ \rho>0 $,
	the Lagrangian function $ \mathcal{L}(\cdot,p) $ is a once-differentiable function with Lipschitz Jacobian . And the local Lipschitz constant is $M_{\text{J}}(p)>0$, that is 
	$$\|\nabla \mathcal{L}(x,p)-\nabla \mathcal{L}(y,p)\|\leq M_{\text{J}}(p)\|x-y\|$$
\end{assumption} 
\begin{assumption}
	\label{ass:lH}
	For any point $ \mathbf{s} \in \mathbf{S}({s}^*(p),\rho)=\{s:\|s-s^*\|\leq\rho\} $, $ \rho>0 $,
	the Lagrangian function $ \mathcal{L}(\cdot,p) $ is a twice-differentiable function with Lipschitz Hessian. And the local Lipschitz constant is $M_{\text{H}}(p)>0$, respectively, that is 
	$$\|\nabla^2 \mathcal{L}(x,p)-\nabla^2 \mathcal{L}(y,p)\|\leq M_{\text{H}}(p)\|x-y\|$$
\end{assumption}
Assumption \ref{ass:lJ} and Assumption \ref{ass:lH} are commonly used in analyses of nonlinear optimization algorithms, such as in Newton's method \cite{nesterovCubicRegularizationNewton2006}. 
Assumption \ref{ass:lH} is made to ensure the bounded error between the second-order approximation of the Lagrangian and the Lagrangian itself. Assumption  \ref{ass:lJ} guarantees a bounded induced norm of the Lagrangian Hessian. Next, we will present two lemmas based on these assumptions.
\begin{lemma} [\cite{nesterovCubicRegularizationNewton2006} Lemma 1]
	\label{lem:cov1}
	If a twice differentiable function $ f(x) $, and the Hessian of function $ f $ is Lipschitz continuous on $\mathcal{F}$:
	$$
	\|\nabla^2 f(x)-\nabla^2 f(y)\| \leq M\|x-y\|, \forall x,y \in \mathcal{F}
	$$
	for some $ M> 0 $, then for any $ x $ and $ y $ from $ \mathcal{F} $, 	
	$$
	\|\nabla f(x)-\nabla f(y)-\nabla^2 f(y)(x-y)\|\leq{\frac{1}{2}} M\|x-y\|^{2}, 
	$$	
	$$
	\begin{aligned}
		| f(x)-f(y)-\nabla f(y)^{\top}(x-y) -  \frac{1}{2}(x-y)^{\top} \nabla^2 f(y)(x-y) |\\
		\leq{\frac{M}{6}}\|x-y\|^{3}. 
	\end{aligned}
	$$
\end{lemma}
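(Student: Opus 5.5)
The statement is Nesterov's Lemma 1. I would prove it from the fundamental theorem of calculus along the segment joining $y$ and $x$. Throughout, $\mathcal{F}$ is taken to be convex, or at least to contain the segment $\{y+\tau(x-y):\tau\in[0,1]\}$; this is implicit in the cited setting and is the one point I would state explicitly. Write $h=x-y$ and $z(\tau)=y+\tau h$.

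\textbf{Gradient bound.} Since $\nabla f$ is differentiable with Jacobian $\nabla^2 f$, we have
\[
\nabla f(x)-\nabla f(y)=\int_0^1 \nabla^2 f(z(\tau))\,h\,d\tau .
\]
Subtracting $\nabla^2 f(y)h=\int_0^1\nabla^2 f(y)h\,d\tau$ gives
\[
\nabla f(x)-\nabla f(y)-\nabla^2 f(y)h=\int_0^1\big(\nabla^2 f(z(\tau))-\nabla^2 f(y)\big)h\,d\tau .
\]
Take norms, use the induced operator norm, and apply the Lipschitz hypothesis $\|\nabla^2 f(z(\tau))-\nabla^2 f(y)\|\le M\tau\|h\|$. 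This yields the bound $\int_0^1 M\tau\|h\|^2\,d\tau=\tfrac12 M\|h\|^2$, which is the first inequality.

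\textbf{Function-value bound.} Set $\phi(\tau)=f(z(\tau))$, so that $\phi'(\tau)=\nabla f(z(\tau))^\top h$. Then
\[
f(x)-f(y)-\nabla f(y)^\top h-\tfrac12 h^\top\nabla^2 f(y)h=\int_0^1\Big(\nabla f(z(\tau))-\nabla f(y)-\tau\nabla^2 f(y)h\Big)^{\!\top} h\,d\tau ,
\]
because $\int_0^1\tau\,d\tau=\tfrac12$. The vector in the integrand is exactly the residual from the first part, applied to the pair $z(\tau),y$ with displacement $\tau h$. By the first part and Cauchy--Schwarz, its inner product with $h$ is bounded by $\tfrac12 M\tau^2\|h\|^2\cdot\|h\|$. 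Integrating gives $\tfrac12 M\|h\|^3\int_0^1\tau^2\,d\tau=\tfrac{M}{6}\|h\|^3$.

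\textbf{Main obstacle.} The only delicate points are justifying the integral representations and the domain. Twice differentiability makes $\phi$ differentiable with $\phi'$ absolutely continuous, since $\nabla f$ is Lipschitz along the segment by the Hessian bound. Hence the fundamental theorem of calculus applies, including for vector-valued integrands handled componentwise. The remaining requirement is that the segment lies in $\mathcal{F}$, as assumed at the outset.
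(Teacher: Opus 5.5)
Your proof is correct and follows the paper's argument essentially step for step. Both integrate the Hessian residual along the segment to get the $\tfrac12 M\|x-y\|^2$ gradient bound, then integrate that residual against $x-y$ to get $\frac{M}{6}\|x-y\|^3$; the paper merely swaps the roles of $x$ and $y$. Your explicit requirement that the segment joining $x$ and $y$ lie in $\mathcal{F}$ is a hypothesis the paper uses implicitly, and stating it is a worthwhile addition.
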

\begin{proof}	
	$$
	\begin{array}{l}
		\|\nabla f(y)-\nabla f(x)-\nabla^2 f(x)(y-x)\|\\
		=\|\int_{0}^{1}[\nabla^2 f(x+\tau(y-x))-\nabla^2 f(x)](y-x)d\tau \| \\
		\le M\|y-x\|^{2}\displaystyle{\int_{0}^{1}}\tau d\tau 
		={\frac{1}{2}}M\|y-x\|^{2}.
	\end{array} 
	$$
	Therefore,
	$$
	\begin{array}{l}
		|f(y)-f(x)-\nabla f(x)^{\top}(y-x) -  \frac{1}{2}(y-x)^{\top} \nabla^2 f(x)(y-x)| \\
		\!= \!|\int_{0}^{1}  \left(\nabla f(x \!+\!\lambda(y \!-\! x)) \!-\! \nabla f(x) \!-\! \lambda \nabla^2 f(x)(y \!-\! x)\right)\!^{\top}\!(y \!- \! x) d\lambda|\\
		{\leq\frac{1}{2}M||y-x||^{3}\int_{0}^{1}\lambda^{2}d\lambda=\frac{M}{6}\|y-x\|^{3}.}\\ 
	\end{array} 
	$$
\end{proof}
Lemma \ref{lem:cov1} states that for functions satisfying the second-order Lipschitz continuity condition, the fitting errors of approximating them with a quadratic function is finite, and it provides an upper bound for the fitting errors.
\begin{lemma} \cite{nesterovCubicRegularizationNewton2006}
	\label{lem:cov2}
	If a twice differentiable function $ f(x) $, and the  gradient of function $ f $ is Lipschitz continuous on $\mathcal{F}$:
	$$	\|\nabla f(x)-\nabla f(y)\| \leq M\|x-y\|, \forall x,y \in \mathcal{F} $$
	for $M>0$, then for any $x$ and $y$ on $\mathcal{F}$:
	$ \| \nabla^2f(x) \| \leq  M $
	$$ |(x-y)^{\top} \nabla^2 f(y)(x-y) \leq M(x-y)^{\top}(x-y)| $$	
\end{lemma}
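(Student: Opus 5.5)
The plan is to prove both claims of Lemma \ref{lem:cov2} from the definition of the Hessian as the derivative of the gradient and the Lipschitz bound on $\nabla f$. The proof is local and elementary, following the same integral and limit arguments as in the proof of Lemma \ref{lem:cov1}.

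\textbf{Step 1: the operator-norm bound $\|\nabla^2 f(y)\|\le M$.}
Fix $y$ in the interior of $\mathcal{F}$ and an arbitrary direction $d$ with $\|d\|=1$. For small $t>0$ we have $y+td\in\mathcal{F}$. Since $f$ is twice differentiable,
$$\nabla^2 f(y)d=\lim_{t\to 0^+}\frac{\nabla f(y+td)-\nabla f(y)}{t}.$$
The Lipschitz assumption bounds the norm of each difference quotient by $M\|td\|/t=M$. Passing to the limit, continuity of the norm gives $\|\nabla^2 f(y)d\|\le M$. Taking the supremum over unit $d$ yields $\|\nabla^2 f(y)\|\le M$, the induced 2-norm.

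For $y$ on the boundary of $\mathcal{F}$ we have two options. One is to assume $\mathcal{F}$ is convex with nonempty interior, as for the ball $\mathbf{S}(s^*(p),\rho)$ used in Assumption \ref{ass:lJ}, and use one-sided directions pointing into $\mathcal{F}$. The other, if the Hessian is continuous, is to pass to the limit from interior points.

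\textbf{Step 2: the quadratic-form bound.}
For any $x,y\in\mathcal{F}$, set $v=x-y$. Cauchy--Schwarz and the definition of the induced norm give
$$|v^\top\nabla^2 f(y)v|\le\|v\|\,\|\nabla^2 f(y)v\|\le\|\nabla^2 f(y)\|\,\|v\|^2\le M\,v^\top v.$$
This is the second inequality, read with the absolute value on the left-hand side only; the bracket placement in the statement is evidently a typo. No symmetry of the Hessian is needed here, although symmetry would also let one use the spectral-radius characterization, $|\lambda_{\max}|\le M$.

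\textbf{Main obstacle.}
The only delicate point is the boundary and domain issue in Step 1: the limit requires $y+td\in\mathcal{F}$ for all directions $d$. I would handle this by stating that $\mathcal{F}$ is taken to be the open (or convex, full-dimensional) neighbourhood in which Assumption \ref{ass:lJ} holds, and extending to the closure by continuity of $\nabla^2 f$, which holds under Assumption \ref{ass:4}.

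Everything else is a direct norm inequality. In particular, the result applies verbatim to $\mathcal{L}(\cdot,p)$ with $M=M_{\text{J}}(p)$, giving $|\Delta u^\top\mathcal{L}_{uu}(p)\Delta u|\le M_{\text{J}}(p)\|\Delta u\|^2$, which is the form needed to compare the cost-guided and error-guided losses afterwards.
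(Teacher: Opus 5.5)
Your proof is correct and follows essentially the same route as the paper: bound the difference quotient of $\nabla f$ by $M$ using the Lipschitz condition, pass to the limit to bound the Hessian, then bound the quadratic form. Your version is more careful than the paper's, which takes the limit only along the single direction $y-x$ and identifies that limit with $\|\nabla^2 f(x)\|$; you instead take the supremum over all unit directions to obtain the operator norm, and you make the Cauchy--Schwarz step explicit.
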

\begin{proof}
	$$ \| \nabla f\left(x +\lambda(y-x)\right) -  \nabla f(x)\| \leq  M \|\lambda(x-y)\|  $$
	$$ \dfrac{\| \nabla f\left(x +\lambda(y-x)\right) -  \nabla f(x)\|}{\|\lambda(x-y)\|} \leq  M $$
	$$ \| \nabla^2f(x) \| = \lim\limits_{\lambda \rightarrow 0} \dfrac{\| \nabla f\left(x +\lambda(y-x)\right) -  \nabla f(x)\|}{\|\lambda(x-y)\|} \leq  M $$
	Therefore, 
	$
	| (y-x)^{\top} \nabla^2 f(x)(y-x) | \leq   M(y-x)^{\top}(y-x) .
	$
\end{proof}

\begin{assumption}
	\label{ass:3}
	For the functional form of $ \pi_{\text {approx }}\left(x ; \theta\right) $, there are exists a set $\Theta $. When $\theta \in \Theta $ the maximum fitting errors does not exceed $\epsilon$,and $\epsilon \le  \rho$ that is 
	$$
	\max(\left\|\pi_{\text {approx }}\left(x ; \theta\right)-u^*\right\|^2) \leq \epsilon \leq  \rho.
	$$
\end{assumption}
Under Assumption~\ref{ass:3}, the function approximator has sufficient approximation capability to ensure the fitting errors is less than $\rho$.
Otherwise, the active constraints may be changed. In that case, using $\Delta{\mathcal{L}}(u,p)$ to measure control law performance would be inappropriate.

\begin{theorem}
	\label{the:lw}
	Given Assumptions \ref{ass:lH} and \ref{ass:3},	let $\mathbf{s}^*(p_i)$ be the optimal solution obtained by solving $\mathcal{P}(p_i)$ and let  $u^*_i$ be the optimal control input , $u \in \mathbf{S}(\mathbf{s}^*(p_i),\epsilon) $, then
	\begin{enumerate}
		\item $\Delta{\hat{\mathcal{L}}}_{p_i}(u)$  is a quadratic approximation to  $\Delta{\mathcal{L}}_{p_i}(u)$, and its error upper bound is $\frac{M_{\text{H}}(p_i)}{6}\|u- u_i^*\|^3$.
		\item Minimizing $L_{\text{w}}(\theta)$ is 	
		equivalent to minimizing the upper bound of $L_{\text{lag}}(\theta)$, and the upper bound is $L_{\text{w}}+ \frac{1}{6}M_{\text{Hs}}\epsilon^3$, where $M_{\text{Hs}} =\sup_{i=1,2,\dots,N_s} \left\{ M_{\text{H}}(p_i) \right\}$.
		($M_{\text{H}}$ is defined in Assumption \ref{ass:lH})
	\end{enumerate}	 	
\end{theorem}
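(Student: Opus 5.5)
The plan is to deduce both parts from Lemma~\ref{lem:cov1}, applied to the Lagrangian restricted to the current input. Fix a sample $p_i$, freeze $\lambda^*,\mu^*$ and the remaining decision variables at their optimal values $\bar{\mathbf{w}}^*$, and consider the scalar function $\phi_i(u):=\mathcal{L}([u,\bar{\mathbf{w}}^*],\lambda^*,\mu^*,p_i)$. By Assumption~\ref{ass:lH}, the Hessian of $\mathcal{L}(\cdot,p_i)$ is $M_{\text{H}}(p_i)$-Lipschitz on the ball $\mathbf{S}(\mathbf{s}^*(p_i),\rho)$. Restricting to the coordinate block $u$ does not increase this constant, because a submatrix has norm no larger than the full matrix and $\|[u,\bar{\mathbf{w}}^*]-[u',\bar{\mathbf{w}}^*]\|=\|u-u'\|$. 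Assumption~\ref{ass:3} gives $\epsilon\le\rho$, so every $u$ in the theorem lies inside the region where the Lipschitz property holds.

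For part 1, I apply the second inequality of Lemma~\ref{lem:cov1} with $x=u$ and $y=u_i^*$. This gives
\[
\bigl|\phi_i(u)-\phi_i(u_i^*)-\nabla\phi_i(u_i^*)^{\top}\Delta u-\tfrac12\Delta u^{\top}\nabla^2\phi_i(u_i^*)\Delta u\bigr|\le \tfrac{M_{\text{H}}(p_i)}{6}\|\Delta u\|^3 .
\]
Here $\phi_i(u)-\phi_i(u_i^*)=\Delta\mathcal{L}_{p_i}(u)$ by \eqref{eq:1lag}. The first- and second-order terms are to be identified with $\mathcal{L}_u(p_i)^{\top}\Delta u$ and the $\mathcal{L}_{uu}(p_i)$ term of \eqref{eq:2lag}, which reads the quadratic term of \eqref{eq:2lag} with the usual $\tfrac12$ (or absorbs it into $\mathcal{L}_{uu}$). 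At the KKT point the stationarity condition in \eqref{eq:KKT} gives $\nabla_{\mathbf{w}}\mathcal{L}=0$, so the linear term vanishes. This yields $|\Delta\mathcal{L}_{p_i}(u)-\Delta\hat{\mathcal{L}}_{p_i}(u)|\le \frac{M_{\text{H}}(p_i)}{6}\|u-u_i^*\|^3$.

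For part 2, I set $u=\pi_{\text{approx}}(x_i;\theta)$ with $\theta\in\Theta$ and use the triangle inequality:
\[
|\Delta\mathcal{L}_i|\le|\Delta u_i^{\top}\mathcal{L}_{uu}(p_i)\Delta u_i|+\tfrac{M_{\text{Hs}}}{6}\|\Delta u_i\|^3 .
\]
Assumption~\ref{ass:3} then bounds $\|\Delta u_i\|^3$ uniformly in $i$ by a power of $\epsilon$. Averaging over the $N_s$ samples gives $\frac1{N_s}\sum_i|\Delta\mathcal{L}_i|\le L_{\text{w}}(\theta)+\frac16 M_{\text{Hs}}\epsilon^3$, using $\gamma$-normalisation or the unnormalised $L_{\text{w}}$ consistently. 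Since the remainder term does not depend on $\theta$, minimising $L_{\text{w}}$ over $\Theta$ is the same as minimising this upper bound.

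The main obstacle is matching this to $L_{\text{lag}}$ exactly, because $L_{\text{lag}}$ uses squared values $\|\Delta\mathcal{L}_i\|^2$. To handle this, I would either interpret the bound for the mean absolute deviation, or square the pointwise bound and control the cross term using $|\Delta u^{\top}\mathcal{L}_{uu}\Delta u|\le M_{\text{J}}\|\Delta u\|^2$ from Lemma~\ref{lem:cov2}. A secondary bookkeeping issue is the power of $\epsilon$: Assumption~\ref{ass:3} bounds $\|\Delta u\|^2$ by $\epsilon$, so the literal bound is $\epsilon^{3/2}$. To obtain the stated $\epsilon^3$, I would read $\epsilon$ as a bound on $\|\Delta u\|$ itself, which is the convention implied by $u\in\mathbf{S}(\mathbf{s}^*(p_i),\epsilon)$ in the theorem.
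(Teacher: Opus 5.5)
Your route is the paper's own. It applies Lemma~\ref{lem:cov1} at each $p_i$, removes the linear term by KKT stationarity, uses the triangle inequality, and takes the uniform bound $\|u-u_i^*\|^3\le\epsilon^3$ from $u\in\mathbf{S}(\mathbf{s}^*(p_i),\epsilon)$ before averaging. The paper's proof is no more careful than yours on the points you flag: it drops the factor $\tfrac12$, and it asserts $L_{\text{lag}}\le L_{\text{w}}+\tfrac16 M_{\text{Hs}}\epsilon^3$ while only actually bounding the mean of $|\Delta\mathcal{L}_i|$ (the squared version needs an extra smallness condition such as $|\Delta\mathcal{L}_i|\le 1$), so these are looseness in the statement itself rather than gaps in your argument.
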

\begin{proof}
	In Assumption \ref{ass:lH}, $\|\nabla^2 \mathcal{L}(x,p)-\nabla^2 \mathcal{L}(y,p)\|\leq M_{\text{H}}(p)\|x-y\|$.
	According to Lemma~\ref{lem:cov1}, for each $p_i$,
	$$
	\begin{aligned}
		|	\Delta \mathcal{L}(u) - (u- u_i^*)^{\top}{\mathcal{L}_{uu}(p_i)}(u- u_i^*) | &\leq \frac{M_{\text{H}}(p_i)}{6}\|u- u_i^*\|^3\\   
	\end{aligned},
	$$ 
	$$
	\begin{aligned}
		|	\Delta \mathcal{L}(u) | &\leq | (u- u_i^*)^{\top}{\mathcal{L}_{uu}(p_i)}(u- u_i^*) | +   \frac{M_{\text{H}}(p_i)}{6}\|u- u_i^*\|^3\\
		& \leq   | (u- u_i^*)^{\top}{\mathcal{L}_{uu}(p_i)}(u- u_i^*) | +\frac{\epsilon^3}{6} M_{\text{Hs}}   
	\end{aligned},
	$$ 
	
	Under Assumptions \ref{ass:3}, for each $x_i \in \mathcal{X}_{\text{feas}}$, $\pi_{\text{approx}}(x_i,\theta) \in \mathbf{S}(\mathbf{s}^*(p_i),\epsilon) $, then
	$$
	\Delta \mathcal{L}(\pi_{\text{approx}}(x_i,\theta)) \leq \Delta \hat{\mathcal{L}}(\pi_{\text{approx}}(x_i,\theta))+\frac{\epsilon^3}{6} M_{\text{Hs}},
	$$
	so 
	$
	L_{\text{lag}} \leq L_{\text{w}}+ \frac{1}{6}M_{\text{Hs}}\epsilon^3.
	$
\end{proof}
The first part of Theorem \ref{the:lw} shows that, at each optimal point $u_i^*$, using \eqref{eq:2lag} to approximate \eqref{eq:1lag} locally, an upper bound for the fitting errors can be obtained.
The second part of  Theorem \ref{the:lw} indicates that when Assumption \ref{ass:3} holds, there exists a fixed upper bound for the entire operational region. Therefore, it can also be concluded that there is an upper bound for approximating $L_{\text{lag}}$ using $L_{\text{w}}$. 
In summary, Theorem \ref{the:lw} shows that employing cost guided learning can give a guaranteed performance for the approximation policy. 

Next, we will relate the error-guided loss $L_{\text{mse}} $to the cost-guided loss $L_{\text{w}}$ through an additional looser upper bound. The theorem is stated as follows:
\begin{theorem}
	\label{the:er}
	Given Assumptions \ref{ass:lJ} and \ref{ass:3},	let $\mathbf{s}^*(p_i)$ be the optimal solution obtained by solving $\mathcal{P}(p_i)$ and let  $u^*_i$ be the optimal control input, $u_i \in \mathbf{S}(\mathbf{s}^*(p_i),\epsilon) $, then
	\begin{enumerate}
		\item 	$M_{\text{J}}(\mathbf{s}^*(p_i))(u- u_i^*)^{\top}(u- u_i^*)$  is an upper bound to  $\Delta{\hat{\mathcal{L}}}(u)$.
		\item Minimizing $L_{\text{mse}}(\theta)$ is equivalent to minimizing the upper bound of $L_{\text{w}}(\theta)$, and the upper bound is $M_{\text{Js}}L_{\text{mse}}$, where  $M_{\text{Js}}=\sup_{i=1,2,\dots,N_s} \left\{ M_{\text{J}}(\mathbf{s}^*(p_i)) \right\}$.
	\end{enumerate}	
\end{theorem}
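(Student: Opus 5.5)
The plan mirrors the proof of Theorem \ref{the:lw}, with Lemma \ref{lem:cov2} in place of Lemma \ref{lem:cov1}. Fix a sample $p_i$. By Assumption \ref{ass:lJ}, the gradient of $\mathcal{L}(\cdot,p_i)$ is Lipschitz on the ball $\mathbf{S}(\mathbf{s}^*(p_i),\rho)$ with constant $M_{\text{J}}(\mathbf{s}^*(p_i))$. Lemma \ref{lem:cov2} then bounds the induced norm of the Lagrangian Hessian on this ball by $M_{\text{J}}(\mathbf{s}^*(p_i))$. Evaluating at the optimum $\mathbf{s}^*(p_i)$ gives $\|\mathcal{L}_{uu}(p_i)\|\le M_{\text{J}}(\mathbf{s}^*(p_i))$.

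The first step, and the main obstacle, is justifying this Hessian bound for the \emph{reduced} Hessian $\mathcal{L}_{uu}(p_i)$. That matrix is the second derivative of $\mathcal{L}$ along the direction in which $u$ varies and the constraint-related variables $\mathbf{\bar{w}_c}$ follow through $\partial c/\partial u$; it is not a plain sub-block of $\nabla^2\mathcal{L}$. I would handle this in one of two equivalent ways.
\begin{itemize}
\item Interpret Assumption \ref{ass:lJ} as applying to $\mathcal{L}$ viewed as a function of $u$ with $\mathbf{\bar{w}}$ eliminated along the constraints. This is the same reduced function whose quadratic model is \eqref{eq:2lag}.
\item Alternatively, write $\mathcal{L}_{uu}=Z^\top\nabla^2\mathcal{L}\,Z$ with $Z=[I;\partial \mathbf{\bar{w}_c}/\partial u]$, and absorb $\|Z\|^2$ into the constant.
\end{itemize}
I would adopt the first reading, consistent with how Theorem \ref{the:lw} uses $M_{\text{H}}$. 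Assumption \ref{ass:3} guarantees $\|u-u_i^*\|^2\le\epsilon\le\rho$, so the perturbed point stays in the ball where the bound holds.

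Given the Hessian bound, Part 1 follows directly. The first-order term of $\Delta\hat{\mathcal{L}}$ in \eqref{eq:2lag} vanishes, because $\nabla\mathcal{L}(\mathbf{s}^*(p_i),p_i)=0$ at the KKT point. Hence $\Delta\hat{\mathcal{L}}(u)=(u-u_i^*)^\top\mathcal{L}_{uu}(p_i)(u-u_i^*)$. The Cauchy--Schwarz inequality and the operator-norm bound then give
\[
|\Delta\hat{\mathcal{L}}(u)|\le \|\mathcal{L}_{uu}(p_i)\|\,\|u-u_i^*\|^2\le M_{\text{J}}(\mathbf{s}^*(p_i))\,(u-u_i^*)^\top(u-u_i^*),
\]
which is exactly the second inequality of Lemma \ref{lem:cov2}.

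For Part 2, substitute $u=\pi_{\text{approx}}(x_i;\theta)$, which is admissible by Assumption \ref{ass:3}. Replace each $M_{\text{J}}(\mathbf{s}^*(p_i))$ by the supremum $M_{\text{Js}}$, sum over $i$, and divide by $N_s$. This gives
\[
\frac{1}{N_s}\sum_{i=1}^{N_s}\left|\Delta\pi_{\text{approx}}(x_i;\theta)^\top\mathcal{L}_{uu}(p_i)\,\Delta\pi_{\text{approx}}(x_i;\theta)\right|\le M_{\text{Js}}\,L_{\text{mse}}(\theta),
\]
that is, $L_{\text{w}}\le M_{\text{Js}}L_{\text{mse}}$.

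Two remarks finish the argument. First, if the normalized $L_{\text{w}}$ with the factor $1/\gamma$ is used, the same argument gives the bound $(M_{\text{Js}}/\gamma)L_{\text{mse}}$. Second, $M_{\text{Js}}$ does not depend on $\theta$, so minimizing $L_{\text{mse}}$ minimizes this upper bound on $L_{\text{w}}$. Combined with Theorem \ref{the:lw}, this yields $L_{\text{lag}}\le M_{\text{Js}}L_{\text{mse}}+\tfrac16 M_{\text{Hs}}\epsilon^3$, a bound looser than the one from $L_{\text{w}}$.
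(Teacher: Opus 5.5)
Your proposal is correct and follows the paper's proof: Lemma~\ref{lem:cov2} bounds $\|\mathcal{L}_{uu}(p_i)\|$ by $M_{\text{J}}$, which bounds the quadratic form by $M_{\text{J}}\|u-u_i^*\|^2$, and taking the supremum $M_{\text{Js}}$ and averaging over the samples gives $L_{\text{w}}\le M_{\text{Js}}L_{\text{mse}}$. You also state two points the paper leaves implicit (the linear term vanishing at the KKT point, and which Hessian the Lipschitz constant controls), and you rightly omit the paper's extra step $M_{\text{J}}(p_i)\|u-u_i^*\|^2\le\|u-u_i^*\|^2$, which would require $M_{\text{J}}(p_i)\le 1$.
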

\begin{proof}
In Assumption \ref{ass:lJ}, $\|\nabla \mathcal{L}(x,p)-\nabla \mathcal{L}(y,p)\|\leq M_{\text{J}}(p)\|x-y\|$.
According to Lemma~\ref{lem:cov2}, for any $u \in \mathbf{S}(u_i^*,\epsilon)$,  we have 
$
\| \nabla^2_u \mathcal{L}(u,p) \| \leq M_{\text{J}}(p_i)  ,
$
then
$$\begin{aligned}
	|(u- u_i^*)^{\top}{\mathcal{L}_{uu}(p_i)}(u- u_i^*) |
	\leq M_{\text{J}}(p_i) \|u- u_i^*\|^2 \leq \|u- u_i^*\|^2.
\end{aligned}$$

So under Assumptions \ref{ass:3}, for each $x_i \in \mathcal{X}_{\text{feas}}$, $\pi_{\text{approx}}(x_i,\theta) \in \mathbf{S}(\mathbf{s}^*(p_i),\epsilon) $, then
$$
\begin{aligned}
	&\Delta \hat{\mathcal{L}}(\pi_{\text{approx}}(x_i,\theta))
	\leq  M_{\text{Js}}\|\pi_{\text{approx}}(x_i,\theta)- u_i^*\|^2,
\end{aligned}
$$
so 
$
L_{\text{w}} \leq  M_{\text{Js}}L_{\text{mse}}
$
\end{proof}
Thus, combining Theorem~\ref{the:lw} and Theorem~\ref{the:er}, the following corollary can be given.
\begin{corollary}
	\label{cor:leq}
	Given the same assumptions in Theorem~\ref{the:lw}, then minimizing $L_{\text{mse}}(\theta)$ is equivalent to minimizing the upper bound of $L_{\text{lag}}(\theta)$, and the upper bound is $M_{\text{Js}}L_{\text{mse}}+ \frac{1}{6}M_{\text{Hs}}\epsilon^3$.
\end{corollary}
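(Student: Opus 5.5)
The plan is to chain the two bounds already established. Theorem~\ref{the:lw} bounds $L_{\text{lag}}$ by $L_{\text{w}}$ plus a cubic remainder, and Theorem~\ref{the:er} bounds $L_{\text{w}}$ by $M_{\text{Js}}L_{\text{mse}}$. Substituting the second into the first gives the corollary.

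The argument will work sample by sample and then average over the data set. Fix $\theta\in\Theta$. By Assumption~\ref{ass:3}, each $u_i:=\pi_{\text{approx}}(x_i;\theta)$ lies within $\epsilon\le\rho$ of $u_i^*$. This does three things:
\begin{itemize}
\item it keeps the active set fixed;
\item it places $u_i$ in the ball where Assumptions~\ref{ass:lJ} and~\ref{ass:lH} are in force;
\item it allows both Lemma~\ref{lem:cov1} and Lemma~\ref{lem:cov2} to be applied at $p_i$.
\end{itemize}
From the proof of Theorem~\ref{the:lw} we have
\[
|\Delta\mathcal{L}_{p_i}(u_i)| \le |\Delta\hat{\mathcal{L}}_{p_i}(u_i)| + \tfrac{1}{6}M_{\text{Hs}}\epsilon^3 .
\]
From the proof of Theorem~\ref{the:er} we have
\[
|\Delta\hat{\mathcal{L}}_{p_i}(u_i)| \le M_{\text{Js}}\|u_i-u_i^*\|^2 .
\]
Combining the two gives the pointwise estimate
\[
|\Delta\mathcal{L}_{p_i}(u_i)| \le M_{\text{Js}}\|u_i-u_i^*\|^2+\tfrac{1}{6}M_{\text{Hs}}\epsilon^3 .
\]
Averaging this over $i=1,\dots,N_s$ yields $L_{\text{lag}}\le M_{\text{Js}}L_{\text{mse}}+\tfrac16 M_{\text{Hs}}\epsilon^3$.

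The "equivalence" part of the statement follows from the shape of this bound. Its second term, $\tfrac16 M_{\text{Hs}}\epsilon^3$, does not depend on $\theta$. Its first term is $L_{\text{mse}}$ multiplied by the positive constant $M_{\text{Js}}$, which is also independent of $\theta$. So minimizing $L_{\text{mse}}$ over $\Theta$ minimizes the bound.

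The main obstacle is bookkeeping of what each loss actually measures. $L_{\text{lag}}$ uses squared $\Delta\mathcal{L}$ values, whereas $L_{\text{w}}$ and the bound in Theorem~\ref{the:lw} use absolute values of the quadratic form. I will follow the paper's convention from Theorem~\ref{the:lw} and treat $L_{\text{lag}}$ as the mean of $|\Delta\mathcal{L}_i|$, so that the averaged inequality matches termwise.

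The $1/\gamma$ normalization in $L_{\text{w}}$ needs a similar check. Either it is omitted, as in Algorithm~\ref{alg:ampcp_cg}, or it is absorbed into $M_{\text{Js}}$. With the normalization kept, the chain still holds with constant $M_{\text{Js}}$ because $\gamma\le M_{\text{Js}}$ by Lemma~\ref{lem:cov2}.

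Finally, the corollary is stated under only the assumptions of Theorem~\ref{the:lw}, but the proof also needs Assumption~\ref{ass:lJ} to invoke Theorem~\ref{the:er}. I will add Assumption~\ref{ass:lJ} explicitly as a hypothesis.
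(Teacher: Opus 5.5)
Your proposal is correct and follows the paper's own route. The paper obtains the corollary simply by chaining $L_{\text{lag}}\le L_{\text{w}}+\frac{1}{6}M_{\text{Hs}}\epsilon^3$ (Theorem~\ref{the:lw}) with $L_{\text{w}}\le M_{\text{Js}}L_{\text{mse}}$ (Theorem~\ref{the:er}); your sample-wise version of that chain is, if anything, more careful, since it bypasses the $1/\gamma$ normalization and makes explicit both the absolute-value convention for $L_{\text{lag}}$ and the use of Assumption~\ref{ass:lJ}. One caveat: Assumption~\ref{ass:3} literally bounds $\|u_i-u_i^*\|^2$ (not $\|u_i-u_i^*\|$) by $\epsilon$, so your claim that $u_i$ lies within $\epsilon$ of $u_i^*$, which underpins the $\epsilon^3$ term and membership in the radius-$\rho$ ball, rests on the identification the paper makes in proving Theorem~\ref{the:lw} rather than on Assumption~\ref{ass:3} as written.
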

We introduce $\pi_{\text{approx}}(x,\hat{\theta}_{\text{mse}})$ and $\pi_{\text{approx}}(x,\hat{\theta}_{\text{w}})$ to represent the approximate policies obtained by error guided learning and cost guided learning respectively.
According to Theorem~\ref{the:lw} and Corollary~\ref{cor:leq}, both $\pi_{\text{approx}}(x,\hat{\theta}_{\text{mse}})$ and $\pi_{\text{approx}}(x,\hat{\theta}_{\text{w}})$ can guarantee that the loss generated by implementing the approximate policy is limited. Smaller fitting error yields smaller upper bound of loss. 
Furthermore, based on Theorem~\ref{the:er}, $\pi_{\text{approx}}(x,\hat{\theta}_{\text{mse}}$) can offer a smaller guaranteed loss cap.

It is worth to note that  $\mathcal{L}_{uu}(p_i)$ is constant in linear MPC problems (a linear time invariant system and a constant quadratic cost).
Thus, concerning linear MPC policy approximation, the optimization of the three functions, $L_{\text{w}}$, $L_{\text{mse}}$, and $L_{\text{lag}}$, is equivalent.

\section{Illustrative Examples}
We employ the proposed methodology to tackle a benchmark problem in the context of Model Predictive Control (MPC) policy approximation\cite{hertneck2018a}. 
This benchmark problem involves a Continuous Stirred-Tank Reactor (CSTR) with two states: the scaled concentration denoted by $x_1$ and the reactor temperature denoted by $x_2$. The coolant flow rate, $u$, is used for controlling the process. The model is described as follows:
$$
\begin{aligned}
	\dot{x}_1 &= \frac{1}{\tau}(1-x_1) - kx_1e^{-\beta/x_2} \\
	\dot{x}_2 &= \frac{1}{\tau}(x_f - x_2) + kx_1e^{-\beta/x_2} - \alpha u(x_2-x_c)
\end{aligned}
$$
The model parameters are: $\tau=20$, $k=300$, $\beta=5$, $x_f=0.3947$, $x_c=0.3816$, and $\alpha=0.117$. The constraints are ${\mathcal{X}}= \begin{bmatrix} 0.0632 & 0.4632 \end{bmatrix} \times \begin{bmatrix}  0.4519 & 0.8519 \end{bmatrix}$  and ${\mathcal{U}}=[0,2]$.
The desired setpoint is given by $x^{sp}=\begin{bmatrix} 0.2632 & 0.6519 \end{bmatrix}^T$ which is an unstable static state and the steady-state input $u_e=0.7853$.
The stage cost is defined as:
$$
\ell(x,u)=\|x-x^{s p}\|^{2}+10^{-4}\|u-u_e\|^{2}. 
$$
The MPC problem is solved with a sampling time of 1 seconds and a prediction horizon of $N=140$.
In this paper, we use a grid-based sampling approach to generate the learning samples in a similar way of \cite{hertneck2018a}, where the training data is evaluated at each grid point.

To approximate the MPC policy, we generate $N_s = 268$ state-action data pairs using a grid-based sampling approach with an interval of 0.02 for both $x_1$ and $x_2$. 
In each of the following scenarios, we also solve the complete NLP problem to compute $\pi_{\mathrm{MPC}}(p)$ for each sample, which serves as a benchmark. 
The NLP problems $\mathcal{P}(p)$ are solved using IPOPT \cite{wachter2006}  with the MUMPS linear solver. For formulating the optimization problem and the sensitivities, CasADi \cite{andersson2019} v3.6.4 was employed. 
All computations were executed on a machine with a 2.60 GHz processor and 16GB of memory.

\begin{figure}[t]
	\centering
	\includegraphics[width=0.9\linewidth]{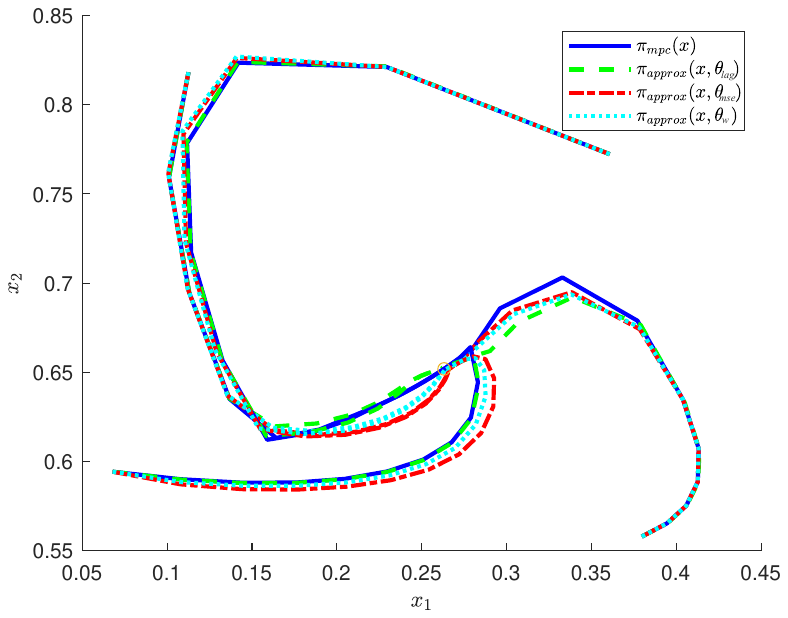}
	\caption{State trajectories starting from different initial conditions given by the MPC policy  (blue) and the approximate policies trained using $L_{\text{lag}}$ (green), $L_{\text{mse}}$ and $L_{\text{w}}$  }
	\label{fig:trajectories}
\end{figure}
Using the generated datasets, we approximate the MPC policy using deep neural networks with 3 hidden layers and 5 neurons in each hidden layer with the \texttt{thanh} as the activation function in each neuron. 
First we approximate the MPC policy under the framework demonstrated in \eqref{eq:mse}, which gives the control policy $\pi_{\text{approx}}(x; \hat{\theta}_{\text{mse}})$. Then we approximate the MPC policy under the framework demonstrated in Algorithm~\ref{alg:ampcp_cg}, which gives the control policy $\pi_{\text{approx}}(x; \hat{\theta}_{\text{w}})$. And we also use \eqref{eq:1lag} as the loss function to train the policy, which gives the control policy $\pi_{\text{approx}}(x; \hat{\theta}_{\text{lag}})$. 
When training the neural network, we use casadi's automatic differentiation tool to solve the derivative, update it with the Adma algorithm, and iterate 2500 times



We evaluate the closed-loop performance of each policy in 35 scenarios. In each scenario, the initial states are randomly sampled from $\mathcal{X}_{\text{feas}}$ (assuming $x_1$ and $x_2$ follow a uniform distribution). Subsequently, in each scenario, the control policy will be executed at intervals of 1 seconds, for a total of 100 executions. Here, $\pi_{\text{MPC}}$ served as the baseline. 

Fig.~\ref{fig:trajectories} shows the state trajectories from different initial conditions given by the MPC policy $\pi_{\text{mpc}}(x)$, the approximate policy $\pi_{\text{approx}}(x,\hat{\theta}_{\text{mse}})$ trained using error guided learning approach and the approximate policy $\pi_{\text{approx}}(x,\hat{\theta}_{\text{w}})$ trained using cost guided learning approach and the Lagrangian based policy $\pi_{\text{approx}}(x,\hat{\theta}_{\text{lag}})$.



Next, we compare the closed-loop performance of approximated policies under different training strategies. We compared their offline training times and online inference times. The results are presented in Table~\ref{tab:loss}.

\begin{table}[h]
	\centering
	\caption{Performance of policies}
	\label{tab:loss}
	\begin{tabular}{@{}cccccc@{}}
		\toprule
		&      & MPC    & $\hat{\theta}_{\text{lag}}$& $ \hat{\theta}_{\text{mse}}$ & $\hat{\theta}_{\text{w}}$ \\ \midrule
		\multicolumn{2}{c}{Offline training time(ms)}  & -      & 389.33  & 42.04    & 49.48            \\
		\hline
		Online  & mean & 197.0 & 27.72 & 27.76  & \textbf{27.58}     \\
		inference time (ms)& max  & 519.9 & 99.63  & \textbf{22.12}   & 73.46   \\
		\hline
		\multirow{2}{*}{$J-J^*$ ($10^{-3}$)}                  & mean & -      & \textbf{1.208} & 1.951   & 1.626   \\
		& max  & -      & \textbf{6.252}  & 9.017 & 10.70    \\ \cmidrule(l){1-6} 
	\end{tabular}
\end{table}
The results in Table~\ref{tab:loss} show using the Lagrangian as the loss function has the best average performance. The cost-guided learning approach is second best. For offline training time, error-guided learning and cost-guided learning are similar, but using the Lagrangian loss required much longer training time. Regarding online inference time, the approximate MPC control laws show significant advantage over MPC. These computations were done on a laptop. Considering embedded platforms with limited computing power, the advantage would be even more pronounced.



\section{Conclusion}
This paper proposed a novel cost-guided learning approach for MPC policy approximation that directly minimizes operational cost rather than just imitation accuracy. The method overcomes limitations of traditional error-guided techniques and links the fitting error with the actual operational cost. The core idea of connecting the loss function with control objectives provides a range of opportunities for future work on performance-driven learning-based control {\em e.g.} offline reinforcement learning \cite{levine2020}, inverse reinforcement learning \cite{arora2021} and virtual reference feedback tuning \cite{campi2002}. 
There are still some challenges remain for future work:
\begin{enumerate}
	\item As MPC problem size increases or more uncertainties are considered, much more data will be needed to train approximate policies. This may limit applications where data collection is difficult or expensive. Some recent work has begun addressing this issue \cite{krishnamoorthy2021}.
	\item  The cost-guided learning approach requires a significant amount of data to train the neural network approximator effectively.
	This work focused on approximating the first step of the MPC plan. Approximating closed-loop MPC policies can improve robustness\cite{kothare1996b} but poses additional difficulties. 
	\item Real-world systems involve noise, uncertainties, and sensor errors which can degrade policy performance. Combining the proposed method with techniques like stochastic MPC is an area for future work. The proposed cost-guided learning concept connects well to self-optimizing control \cite{zhou2024}, suggesting extensions of this work to handle persistent disturbances.
\end{enumerate}

%
\bibliographystyle{unsrt}
\bibliography{reference}

\end{document}